\theoremstyle{definition}
\newtheorem{definition}{Definition}
\theoremstyle{plain}
\newtheorem{proposition}[definition]{Proposition}
\newtheorem*{prop*}{Proposition}
\newtheorem{corollary}[definition]{Corollary}
\newtheorem*{stel*}{Theorem}
\theoremstyle{remark}
\newtheorem{remark}[definition]{Remark}
\title{Non-definability of rings of integers in most algebraic fields}
\date{\today}
\author{Philip Dittmann}
\address{Technische Universit\"at Dresden, Fakult\"at Mathematik, Institut f\"ur Algebra, 01062 Dresden, Germany}
\email{philip.dittmann@tu-dresden.de}
\author{Arno Fehm}
\email{arno.fehm@tu-dresden.de}
\begin{document}

\begin{abstract}
We show that the set of algebraic extensions $F$ of $\mathbb{Q}$ in which $\mathbb{Z}$ or the ring of integers $\mathcal{O}_F$ are definable  is meager in the set of all algebraic extensions.
\end{abstract}

\maketitle

It is proven in \cite[Theorem 1.1, Corollary 5.7]{EMSW} that the
set of subfields $F$ of $\overline{\mathbb{Q}}$
in which one of $\mathbb{Z}$, $\mathbb{Q}\setminus\mathbb{Z}$, $\mathcal{O}_F$, $F\setminus\mathcal{O}_F$ is existentially definable is a meager subset of the space $\mathcal{E}$ of all subfields $E$ of $\overline{\mathbb{Q}}$,
in the topology induced from $2^{\overline{\mathbb{Q}}}$. 
In this short note we explain how a stronger statement 
can be deduced from known results from 
field arithmetic (which in particular studies certain properties of algebraic extensions of $\mathbb{Q}$) and
model theory (which studies definable subsets in structures with certain properties).

Recall that a field $F$ is {\em PAC} if every geometrically irreducible $F$-variety has an $F$-rational point, {\em $\omega$-free} if every finite embedding problem for the absolute Galois group $G_F$ is solvable,
and {\em Hilbertian} if $\mathbb{A}^1(F)$ is not thin,
i.e.~for every finitely many 
absolutely irreducible $f_1,\dots,f_n\in F[X,Y]$ monic of degree at least $2$ in $Y$, and $0\neq g\in F[X]$
there exists $x\in F$ such that $g(x)\neq 0$ and $f_1\cdots f_n(x,Y)$ has no zero in $F$,
see chapters 11, 27, 12 and section 13.5 of \cite{FJ}.

\begin{proposition}\label{prop:comeager}
The set of subfields $F$ of $\overline{\mathbb{Q}}$
which are $\omega$-free and PAC is comeager in $\mathcal{E}$.
\end{proposition}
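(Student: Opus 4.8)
The plan is to exhibit the set in question as a dense $G_\delta$ subset of $\mathcal{E}$ and to invoke the Baire category theorem. First I would record that $\mathcal{E}$ is a closed subset of the Cantor space $2^{\overline{\mathbb{Q}}}$: for each finite tuple of elements of $\overline{\mathbb{Q}}$ the relevant instance of a field axiom (closure under $+,\cdot$, inverses, containment of $0,1$) is a clopen condition, and $\mathcal{E}$ is the intersection of countably many of these. Hence $\mathcal{E}$ is compact metrizable, in particular a Baire space, and it suffices to show that
$\mathcal{S}:=\{F\in\mathcal{E}: F\text{ is PAC and }\omega\text{-free}\}$
is a dense $G_\delta$.

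To see that $\mathcal{S}$ is $G_\delta$, I would treat PAC-ness and $\omega$-freeness separately and write each as a countable intersection of open sets. Being PAC is the conjunction, over the countably many pairs $(L,V)$ with $L\subseteq\overline{\mathbb{Q}}$ a number field and $V$ a geometrically irreducible affine $L$-variety (one may even restrict to curves), of the condition $F\in\mathcal{U}_{L,V}:=\{F\in\mathcal{E}: L\not\subseteq F\text{ or }V(F)\neq\emptyset\}$; each $\mathcal{U}_{L,V}$ is open because a witness --- either an element of $L\setminus F$, or the finitely many coordinates of an $F$-rational point of $V$ --- still witnesses membership on a basic neighbourhood of $F$.

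The $\omega$-free condition I would handle by parametrising finite embedding problems for $G_F$. Every finite embedding problem for $G_F$ descends to data $(L,N,B,\beta)$ consisting of a number field $L\subseteq F$, a finite Galois extension $N/L$ with $N\cap F=L$, a finite group $B$, and an epimorphism $\beta\colon B\twoheadrightarrow\operatorname{Gal}(N/L)$; setting $\mathcal{W}_{L,N,B,\beta}$ to be the set of $F\in\mathcal{E}$ for which this datum is \emph{not} well-posed ($L\not\subseteq F$ or $N\cap F\neq L$) or for which the associated embedding problem over $G_F$ is (properly) solvable, one gets $\{F\in\mathcal{E}:G_F\ \omega\text{-free}\}=\bigcap_{(L,N,B,\beta)}\mathcal{W}_{L,N,B,\beta}$ over countably many data. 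The point --- and the step I expect to be the main technical obstacle --- is that each $\mathcal{W}_{L,N,B,\beta}$ is open. The "not well-posed" part is visibly open; for the rest one uses that a finite Galois extension of $F$ together with its Galois action and a distinguished subextension is already defined over a number field $L'\subseteq F$. Consequently solvability of the embedding problem over $G_F$ is equivalent to the existence of a number field $L'$ with $L\subseteq L'\subseteq F$ and $N\cap L'=L$ and of a Galois extension $N'/L'$ with the prescribed group and compatibility and with $N'\cap F=L'$; for \emph{fixed} $(L',N')$ the conditions $L'\subseteq F$ and $N'\cap F=L'$ involve only finitely many elements of $\overline{\mathbb{Q}}$, hence are clopen, so $\mathcal{W}_{L,N,B,\beta}$ is a countable union of clopen sets and in particular open.

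Finally I would prove that $\mathcal{S}$ is dense. A nonempty basic open subset of $\mathcal{E}$ has the form $\{F\in\mathcal{E}: L''\subseteq F,\ c_1,\dots,c_m\notin F\}$ with $L''\subseteq\overline{\mathbb{Q}}$ a number field and $c_1,\dots,c_m\in\overline{\mathbb{Q}}\setminus L''$. Here I would invoke the theorems of Jarden (see \cite{FJ}) on the structure of a random algebraic extension: since the number field $L''$ is countable and Hilbertian, for Haar-almost-every $\sigma=(\sigma_1,\sigma_2,\dots)\in G_{L''}^{\mathbb{N}}$ the fixed field $F_\sigma$ of $\{\sigma_i\}$ in $\overline{\mathbb{Q}}$ is both PAC and $\omega$-free (indeed $G_{F_\sigma}$ is free profinite of countable rank). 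Now $F_\sigma\supseteq L''$ always, and for each $j$ the set of $\sigma$ with $c_j\in F_\sigma$ is the set of $\sigma$ all of whose coordinates fix $c_j$, of measure $\prod_{i}[L''(c_j):L'']^{-1}=0$ because $[L''(c_j):L'']\ge 2$. Hence almost every $\sigma$ produces a field $F_\sigma$ lying in the given basic open set, so $\mathcal{S}$ meets it. Thus $\mathcal{S}$ is a dense $G_\delta$ in the Baire space $\mathcal{E}$, hence comeager.
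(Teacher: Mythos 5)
Your handling of the $G_\delta$ half is sound and genuinely different from the paper's: you show directly that both PAC and $\omega$-freeness are countable intersections of open conditions, the latter by descending finite embedding problems and their solutions to number fields so that solvability becomes a countable union of clopen conditions (the finitely many intermediate fields of $N'/L'$ make the condition $N'\cap F=L'$ clopen, as you implicitly use). The paper instead avoids analysing $\omega$-freeness altogether: it writes the set in question as the intersection of the PAC locus and the Hilbertian locus, both dense $G_\delta$, and invokes the theorem that Hilbertian PAC fields are $\omega$-free \cite[Theorem 5.10.3]{Jarden_Patching}. Your route trades that theorem for the descent argument; this half is fine.

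The density argument, however, rests on a false statement. There is no theorem that for Haar-almost all $\sigma=(\sigma_1,\sigma_2,\dots)\in G_{L''}^{\mathbb{N}}$ the fixed field $F_\sigma$ is PAC and $\omega$-free: in fact almost every such sequence topologically generates $G_{L''}$ (for each proper open subgroup $M$ the probability that all $\sigma_i$ lie in $M$ is $\lim_n[G_{L''}:M]^{-n}=0$, and there are only countably many open subgroups), so almost surely $F_\sigma=L''$, a number field, which is neither PAC nor $\omega$-free. This is consistent with your own computation that almost surely no $c_j$ lies in $F_\sigma$ --- almost surely nothing outside $L''$ does. The correct almost-everywhere results concern finite tuples $\sigma\in G_{L''}^e$: the fixed field $\overline{\mathbb{Q}}(\sigma)$ is almost surely PAC with absolute Galois group free of finite rank $e$ (hence not $\omega$-free), while the maximal subextension of $\overline{\mathbb{Q}}(\sigma)$ Galois over $L''$, usually denoted $\overline{\mathbb{Q}}[\sigma]$, is almost surely PAC and $\omega$-free (see \cite[Chapter 27]{FJ}). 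You can repair your density step by using $\overline{\mathbb{Q}}[\sigma]$ with $e$ fixed so large that $\sum_{j=1}^m[L''(c_j):L'']^{-e}<1$, so that with positive probability no $c_j$ is fixed by all $\sigma_i$ and hence no $c_j$ lies in $\overline{\mathbb{Q}}[\sigma]\subseteq\overline{\mathbb{Q}}(\sigma)$; alternatively, follow the paper: number fields are dense in $\mathcal{E}$ and Hilbertian, Jarden's PAC Nullstellensatz \cite[Theorem 18.6.1]{FJ} gives density of the PAC locus, and the intersection of these two comeager sets is comeager and consists of $\omega$-free PAC fields.
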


\begin{proof}
%
%
%
%
We claim that both the set
$\mathcal{P}$ of PAC fields in $\mathcal{E}$
and the set $\mathcal{H}$ of Hilbertian fields in $\mathcal{E}$
are dense $G_\delta$-sets and therefore comeager.
Since the union of two meager sets is meager,
and Hilbertian PAC fields are $\omega$-free \cite[Theorem 5.10.3]{Jarden_Patching},
this then implies the claim.

The set $\mathcal{P}$ 
is dense in $\mathcal{E}$, 
since for any finite extensions $\mathbb{Q}\subseteq K\subseteq L$,
Jarden's PAC Nullstellensatz \cite[Theorem 18.6.1]{FJ} gives a PAC field $K\subseteq F\subseteq\overline{\mathbb{Q}}$ with $F\cap L=K$.
Moreover, 
$\mathcal{P}$ is the intersection of the countably many open sets 
$$
 U_{f}=\{F\in\mathcal{E}:f\notin F[X,Y]\}\cup\bigcup_{x,y\in\overline{\mathbb{Q}},f(x,y)=0}\big\{F\in\mathcal{E}:
 x,y\in F \big\}
$$ 
for $f\in\overline{\mathbb{Q}}[X,Y]$ irreducible,
and hence a $G_\delta$-set.

The set $\mathcal{H}$ is dense in $\mathcal{E}$ since every number field is Hilbertian (this is Hilbert's irreducibility theorem,
see \cite[Theorem 3.4.1]{Serre} or \cite[Theorem 13.3.5]{FJ}).
Moreover,
$\mathcal{H}$ is the intersection of the countably many open sets
\begin{eqnarray*}
 V_{f_1,\dots,f_n,g} &=& \mathcal{E}\setminus\{F\in\mathcal{E}:f_1,\dots,f_n,g\in F[X,Y]\}\\
 &&\cup\bigcup_{x\in\overline{\mathbb{Q}},g(x)\neq0}\bigcap_{i=1}^n\bigcap_{y\in\overline{\mathbb{Q}},f_i(x,y)=0}
 \{ F\in\mathcal{E}:x\in F,y\notin F\}
\end{eqnarray*}
 where $n>0$, $f_1,\dots,f_n\in\overline{\mathbb{Q}}[X,Y]$ monic of degree at least $2$ in $Y$ and irreducible,
 and $0\neq g\in\overline{\mathbb{Q}}[X]$.
\end{proof}

\begin{remark}
By \cite[Theorem 11.2.3]{FJ}, it would suffice to take $U_f$ with $f\in\mathbb{Q}[X,Y]$.
The fact that the set of Hilbertian PAC fields 
$F\subseteq\overline{\mathbb{Q}}$ is dense in $\mathcal{E}$
could also be deduced directly by applying \cite[Theorem 2.7]{Jarden}
instead of the PAC Nullstellensatz.
\end{remark}

\begin{proposition}\label{nondef}
In an $\omega$-free PAC field $F$, 
every definable subring $R\subseteq F$ is a field.
\end{proposition}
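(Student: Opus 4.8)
The plan is to argue by contradiction: a definable subring $R\subseteq F$ that is not a field would show that $F$ has the strict order property, whereas the theory of any $\omega$-free PAC field does not.

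So suppose $R\subseteq F$ is definable, say over a finite tuple of parameters, and is not a field. Since $R$ is a subring of the field $F$ it is an integral domain, so there is $a\in R$ with $a\neq 0$ which is not invertible in $R$. Let $\psi(x,y)$ be the formula (with the same parameters) saying that $x,y\in R$, that $xz=y$ for some $z\in R$, and that there is no $z\in R$ with $yz=x$; that is, that $x$ properly divides $y$ inside $R$. One checks immediately that $\psi$ defines a strict partial order (irreflexivity is witnessed by $z=1$, and transitivity is purely formal). Moreover $a,a^{2},a^{3},\dots$ is an infinite strictly $\psi$-increasing chain: indeed $a^{i}\mid a^{j}$ in $R$ for $i\le j$, while if $a^{j}z=a^{i}$ with $z\in R$ and $j>i$, then cancelling $a^{i}$ in the field $F$ gives $a^{\,j-i}z=1$, so that $a$ would be invertible in $R$ --- a contradiction. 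Hence $\psi$ witnesses the strict order property for the complete theory of $F$.

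It therefore suffices to invoke the model-theoretic tameness of $\omega$-free PAC fields: the theory of any $\omega$-free PAC field does not have the strict order property; concretely, such fields are $\mathrm{NSOP}_1$ (hence in particular $\mathrm{NSOP}$), or, more elementarily, one only needs that their theory does not define an infinite chain. This contradicts the previous paragraph, so no such $R$ exists and every definable subring of $F$ is a field. The algebraic content above is trivial; the one non-routine point is pinning down the exact reference for the structural tameness of $\omega$-free PAC fields. (Equivalently, one could run the argument through the absence of a nontrivial definable valuation ring on $F$, with the value-group ordering playing the role of $\psi$.)
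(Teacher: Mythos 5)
Your proposal is correct and takes essentially the same route as the paper: a definable subring that is not a field yields an infinite definable chain under (proper) divisibility, hence the strict order property, which is then ruled out by the model-theoretic tameness of $\omega$-free PAC fields. The only difference is the final citation: the paper goes strict order property $\Rightarrow$ SOP $\Rightarrow$ ${\rm SOP}_3$ and invokes Chatzidakis's theorem that $\omega$-free PAC fields are ${\rm NSOP}_3$, whereas you invoke the (also known, and mentioned in the paper's remarks) stronger fact that they are ${\rm NSOP}_1$, which likewise excludes the strict order property.
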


\begin{proof}
An integral domain $R$ is partially ordered by the relation
    \[ a \preceq b \iff a = b \vee (a \mid b \wedge b \nmid a) .\]
If $R$ is not a field, the powers of a non-zero non-unit form
an infinite chain with respect to $\preceq$, which shows that $R$ has the strict order property \cite[Definition 2.1]{Shelah},
cf.~the argument in \cite[Chapter 1.2 Lemma 1]{Poizat}.
The strict order property 
implies the strong order property SOP \cite[Definition 2.2, Claim 2.3(1)]{Shelah},
which in turn implies the 3-strong order property ${\rm SOP}_3$ \cite[Definition 2.5, Claim 2.6]{Shelah}. 
However, $\omega$-free PAC fields do not have ${\rm SOP}_3$ by
Chatzidakis's result
\cite[Theorem 3.10]{Chatzidakis},
hence so has any structure definable in them.
\end{proof}

\begin{remark}\label{rem:JK}\label{rem:bounded}
\begin{enumerate}
\item The same conclusion holds if the PAC field $F$ 
is `bounded' (rather than $\omega$-free),
e.g.~$G_F$ is finitely generated, since then its theory is even {\em simple} \cite[Corollary 4.8]{ChatzidakisPillay},
in particular it does not have ${\rm SOP}_3$ \cite[Claim 2.7]{Shelah}.
\item 
Moreover, a PAC field of characteristic zero
also has no definable proper sub{\em fields}
\cite[Lemma 6.1 and Proposition 4.1]{JK}.
\item It is known that $\omega$-free PAC fields
satisfy not even the weaker property ${\rm SOP}_1$ (rather than ${\rm SOP}_3$),
see \cite[Corollary 6.8]{CR} and \cite[Section 9.3]{KR}.
\end{enumerate}
\end{remark}

\begin{corollary}
The set of subfields $F$ of $\overline{\mathbb{Q}}$ in which
$\mathbb{Z}$ or $\mathcal{O}_F$ are definable is meager in 
$\mathcal{E}$.
\end{corollary}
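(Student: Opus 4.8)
The plan is to combine the two preceding propositions directly. By Proposition~\ref{prop:comeager}, the set of $F \in \mathcal{E}$ that are $\omega$-free and PAC is comeager, so its complement $\mathcal{N}$ is meager. It therefore suffices to show that every $F$ in which $\mathbb{Z}$ or $\mathcal{O}_F$ is definable lies in $\mathcal{N}$; equivalently, that no $\omega$-free PAC field $F \subseteq \overline{\mathbb{Q}}$ admits a definition of $\mathbb{Z}$ or of $\mathcal{O}_F$. Both $\mathbb{Z}$ and $\mathcal{O}_F$ are subrings of $F$ which are not fields (they are integral domains with non-zero non-units, e.g.\ any rational prime), so Proposition~\ref{nondef} immediately rules out their definability in an $\omega$-free PAC field. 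Hence the set of $F$ in which one of them is definable is contained in the meager set $\mathcal{N}$, and is therefore itself meager.

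First I would spell out that $\mathbb{Z}$ is a subring of every $F \in \mathcal{E}$ (it is the prime ring), and that $\mathcal{O}_F$ is by definition a subring of $F$; in each case it is an integral domain (being a subring of the field $F$) that is not a field, since $2$ is a non-zero element with no multiplicative inverse in the ring. Then the one-line deduction is: if $F$ is $\omega$-free and PAC, Proposition~\ref{nondef} says every definable subring of $F$ is a field, so neither $\mathbb{Z}$ nor $\mathcal{O}_F$ is definable in $F$. Contrapositively, definability of $\mathbb{Z}$ or $\mathcal{O}_F$ in $F$ forces $F$ to fail $\omega$-freeness or PAC, i.e.\ $F \in \mathcal{N}$.

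There is essentially no obstacle here, since both of the hard inputs (the comeagerness of the $\omega$-free PAC locus, and the non-definability of non-field subrings in such fields) are already established above; the only thing to be careful about is the trivial verification that $\mathbb{Z}$ and $\mathcal{O}_F$ genuinely are subrings that are not fields, and that definability over the language of rings is what is meant in both Proposition~\ref{nondef} and the statement. One could optionally remark, using Remark~\ref{rem:bounded}(1), that the same conclusion holds with ``$\omega$-free'' weakened to ``bounded'', but for the meagerness statement the $\omega$-free version already suffices since that locus is comeager.
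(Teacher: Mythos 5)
Your proof is correct and follows exactly the route the paper intends: the corollary is stated without proof precisely because it is the immediate combination of Proposition~\ref{prop:comeager} and Proposition~\ref{nondef}, together with the trivial observation that $\mathbb{Z}$ and $\mathcal{O}_F$ are subrings of $F$ that are not fields. Nothing is missing.
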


\begin{remark}
The same arguments go through for separable algebraic extensions of $\mathbb{F}_p(t)$ instead of $\mathbb{Q}$.
If one is interested only in $\mathbb{Z}$ not being {\em existentially} definable,
one could apply the much more elementary \cite[Theorem 2]{Fehm} and \cite[Theorem 1]{Anscombe},
which work more generally for {\em large} fields,
instead of Proposition \ref{nondef}.
\end{remark}

\begin{remark}
By combining Proposition \ref{prop:comeager} and
 Remark \ref{rem:JK}(2)
we also obtain a strengthening of \cite[Corollary 5.8]{EMSW}:
For every number field $K$, the set of fields $F\subseteq\overline{\mathbb{Q}}$ containing $K$ in which $K$ is definable is meager in $\mathcal{E}$.
\end{remark}

\begin{remark}
Similarly, we
obtain a strengthening of \cite[Corollary 5.14]{EMSW}:
If $\bar{\mathcal{E}}$ denotes the space $\mathcal{E}$ modulo isomorphism of fields, the set of isomorphism classes of fields $F\subseteq\overline{\mathbb{Q}}$ in which $\mathbb{Z}$, $\mathcal{O}_F$, or some some fixed number field $K$ are definable is meager in $\bar{\mathcal{E}}$.
Indeed, as the sets $\mathcal{P}$ and $\mathcal{H}$ (notation from the proof of Proposition \ref{prop:comeager}) are dense $G_\delta$-sets invariant under isomorphism, and the quotient map $\mathcal{E}\rightarrow\bar{\mathcal{E}}$ is continuous and closed, also the images of $\mathcal{P}$ and $\mathcal{H}$ are dense $G_\delta$-sets, and therefore comeager in $\bar{\mathcal{E}}$.
\end{remark}

\begin{remark}
We sketch how a strengthening of \cite[Theorem 5.11]{EMSW} can also be obtained:
The set of computable and decidable fields $F\subseteq\overline{\mathbb{Q}}$ in which neither $\mathbb{Z}$ nor $\mathcal{O}_F$ are definable is dense in $\mathcal{E}$.
Indeed, given finite extensions $\mathbb{Q}\subseteq K\subseteq L$, let $e$ be the minimal number of generators of the Galois group of the Galois closure $\hat{L}$ of $L/K$. By slightly adapting the proof of \cite[Proposition 2.5]{JardenShlapentokh}
one finds a computable and decidable PAC field $K\subseteq F\subseteq\overline{\mathbb{Q}}$ with absolute Galois group free profinite on $e$ generators and $F\cap\hat{L}=K$,
and Remark~\ref{rem:bounded}(1) applies to $F$.
\end{remark}

\subsection*{Acknowledgements}
The authors would like to thank Itay Kaplan and Yatir Halevi for help with references.
Special thanks go to \url{forkinganddividing.com}.
This work was done while P.~D.\ was a postdoctoral fellow of the Mathematical Sciences Research Institute in Berkeley, California, during the Fall 2020 semester, and as such supported by the US National Science Foundation under Grant No.\ DMS-1928930.
A.~F.~was funded by the Deutsche Forschungsgemeinschaft (DFG) - 404427454.

\end{document}